\newtheorem{theorem}{Theorem}[]
\newtheorem{proposition}[theorem]{Proposition}
\begin{document}
\title{The density of ramified primes}
\author{Jyoti Prakash Saha}
\address{Department of Mathematics, Ben-Gurion University of the Negev, Be'er Sheva 8410501, Israel}
\email{jyotipra@post.bgu.ac.il}

\subjclass[2010]{11F80}

\keywords{Galois representations, Ramification}

\begin{abstract}
Let $F$ be a number field, $\mathcal{O}$ be a domain with fraction field $\mathcal{K}$ of characteristic zero and $\rho: \mathrm{Gal}(\overline F/F) \to \mathrm{GL}_n(\mathcal{O})$ be a representation such that $\rho\otimes\overline{\mathcal{K}}$ is semisimple. If $\mathcal{O}$ admits a finite monomorphism from a power series ring with coefficients in a $p$-adic integer ring (resp. $\mathcal{O}$ is an affinoid algebra over a $p$-adic number field) and $\rho$ is continuous with respect to the maximal ideal adic topology (resp. the Banach algebra topology), then we prove that the set of ramified primes of $\rho$ is of density zero. If $\mathcal{O}$ is a complete local Noetherian ring over $\mathbb{Z}_p$ with finite residue field of characteristic $p$, $\rho$ is continuous with respect to the maximal ideal adic topology and the kernels of pure specializations of $\rho$ form a Zariski-dense subset of $\mathrm{Spec} \mathcal{O}$, then we show that the set of ramified primes of $\rho$ is of density zero. These results are analogues, in the context of big Galois representations, of a result of Khare and Rajan, and are proved relying on their result.
\end{abstract}

\maketitle

\section{Introduction}
The study of continuous representations of the absolute Galois groups of number fields with coefficients in $p$-adic number fields is an important theme in number theory. For each normalized eigen cusp form of weight at least two, there exists an associated Galois representation of the absolute Galois group $G_{\mathbb{Q}}$ of $\mathbb{Q}$ with coefficients in $\overline{\mathbb{Q}}_p$ by the works of Eichler \cite{Eichler54}, Shimura \cite{ShimuraCorrespondances} and Deligne \cite{DeligneModFormAndlAdicRepr}. In 1980's, Hida constructed $p$-adic analytic families of ordinary modular forms \cite{HidaGalrepreord, HidaIwasawa}. In particular, he proved that there exists a big Galois representation (i.e. a Galois representation of $G_{\mathbb{Q}}$ with coefficients in a finite extension of the fraction field of $\mathbb{Z}_p[[1+p\mathbb{Z}_p]]\simeq \mathbb{Z}_p[[X]]$) interpolating the traces of the Galois representations attached to the $p$-ordinary normalized eigen cusp forms of a given tame level. It is known that the Galois representations associated with the normalized eigen cusp forms are unramified almost everywhere and so is Hida's big Galois representation. 

The \'etale cohomologies of smooth projective varieties over number fields with coefficients in $\mathbb{Q}_p(r)$ are examples of continuous representations of the absolute Galois groups of number fields. These representations are unramified almost everywhere. However, there exist continuous, reducible, indecomposable representations which are ramified everywhere (see \cite[III-12]{SerreAbelianEllAdic}). In \cite{RamakrishnaInfinitelyRamified}, Ramakrishna constructed examples of two dimensional surjective representations of $G_{\mathbb{Q}}$ over $\mathbb{Z}_p$ which are ramified at infinitely many primes. One may also consider representations of $G_{\mathbb{Q}}$ with coefficients in extensions of the fraction field of $\mathbb{Z}_p[[X]]$ which are ramified at infinitely many primes. Some examples of such representations can be obtained by taking extension of scalars of the representations constructed by Ramakrishna.

Khare and Rajan proved in \cite{KhareRajanRamification} that the set of ramified primes of a continuous semisimple representation of the absolute Galois group of a number field over a $p$-adic number field is of density zero. One may ask if the set of ramified primes of the representations of the absolute Galois groups of number fields with coefficients in integral domains of large Krull dimension (for example, $\mathbb{Z}_p[[X_1, \cdots, X_m]]$ or $\mathbb{Q}_p\langle X_1, \cdots, X_m\rangle$) is of density zero. The aim of this article is to answer this question under suitable assumptions. 

\section{Results obtained}
In the following, $\mathcal{O}$ denotes an integral domain of characteristic zero with fraction field $\mathcal{K}$. Fix an algebraic closure $\overline{\mathcal{K}}$ of $\mathcal{K}$. Let $\rho: G_F \to \mathrm{GL}_n(\mathcal{O})$ be a representation of the absolute Galois group $G_F:= \mathrm{Gal}(\overline F/F)$ of a number field $F$ such that $\rho\otimes \overline{\mathcal{K}}$ is semisimple. We prove the following results about the set of ramified primes of $\rho$.

\begin{theorem}
\label{Thm}
If $\mathcal{O}$ admits a finite monomorphism from a power series ring with coefficients in a $p$-adic integer ring (resp. $\mathcal{O}$ is an affinoid algebra over a $p$-adic number field) and $\rho$ is continuous with respect to the maximal ideal adic topology (resp. the Banach algebra topology), then the set of ramified primes of $\rho$ is of density zero. 
\end{theorem}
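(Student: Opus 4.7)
The plan is to reduce the statement to the theorem of Khare and Rajan by passing to a single, well-chosen continuous specialization of $\rho$ to a representation with values in an algebraic closure of a $p$-adic number field. The key point is that, under the hypotheses, there should be enough such specializations to find one whose image captures the full ramification of $\rho$.

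For each prime $\ell$ of $F$ at which $\rho$ is ramified, I would form the ideal
\[
\mathcal{J}_\ell := \bigl\langle (\rho(g) - 1)_{ij} : g \in I_\ell,\ 1 \le i, j \le n \bigr\rangle \subseteq \mathcal{O},
\]
where $I_\ell$ is an inertia subgroup at $\ell$. The ideal $\mathcal{J}_\ell$ vanishes if and only if $\rho$ is unramified at $\ell$, so it is nonzero whenever $\rho$ is ramified at $\ell$. Moreover, for any continuous ring homomorphism $\pi : \mathcal{O} \to \overline L$ into an algebraic closure of a $p$-adic number field, the representation $\pi\rho$ will be ramified at $\ell$ precisely when $\pi(\mathcal{J}_\ell) \ne 0$.

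I would then produce a single such $\pi$ satisfying (i) $\pi(\mathcal{J}_\ell) \ne 0$ for every ramified prime $\ell$ of $\rho$, and (ii) $\pi\rho \otimes \overline L$ remains semisimple. In the power series case, continuous $\mathcal{O}_0$-algebra maps $A = \mathcal{O}_0[[X_1, \ldots, X_m]] \to \overline L$ correspond to the $\overline L$-points of the open unit polydisc over $\mathcal{O}_0$; since $\mathcal{O}$ is module-finite over $A$, each such specialization extends, after possibly enlarging $L$, to a continuous map $\mathcal{O} \to \overline L$. In the affinoid case one uses the rigid space $\mathrm{Sp}(\mathcal{O})$ analogously. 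In both settings, the locus of specializations killing a fixed nonzero ideal of $\mathcal{O}$ is a proper Zariski-closed analytic subset, hence nowhere dense in the natural $p$-adic topology. The non-semisimplicity locus should likewise be a proper analytic subset, since $\rho \otimes \overline{\mathcal{K}}$ is semisimple. A countable union of nowhere-dense closed subsets of a Baire space is meager, so a $\pi$ in the complement will exist.

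For such $\pi$, the continuous semisimple representation $\pi\rho$, which will factor through $\mathrm{GL}_n(L')$ for some finite extension $L'/\mathbb{Q}_p$, has the same set of ramified primes as $\rho$. Applying the theorem of Khare and Rajan to $\pi\rho$ then gives that this common set is of density zero. The main obstacle I anticipate is making the Baire category argument rigorous: in particular, extending a specialization of the power series subring to one of $\mathcal{O}$ compatibly with continuity, and verifying that generic specializations preserve semisimplicity; both issues will require some care with the integral extension $\mathcal{O}/\mathcal{O}_0[[X_1, \ldots, X_m]]$ or with the structure of the affinoid algebra.
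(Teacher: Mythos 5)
Your overall strategy is the same as the paper's: pass to a single continuous $\overline{\mathbb{Q}}_p$-valued specialization that preserves both the set of ramified primes and semisimplicity, then invoke Khare--Rajan. Your ideal-theoretic encoding of ramification via $\mathcal{J}_\ell$ is equivalent (after using Noetherianity to reduce to a single nonzero element per $\ell$) to the paper's device of choosing a nonzero matrix entry to preserve, though the paper additionally invokes Grothendieck's monodromy theorem to observe that at places of trivial monodromy the inertia acts through a finite group, so that a nontrivial root-of-unity eigenvalue persists under \emph{every} specialization and no constraint is needed at those places at all.

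However, two steps that you flag as "requiring care" are in fact the real mathematical content of the proof, and as written they are genuine gaps. First, the assertion that "the non-semisimplicity locus should likewise be a proper analytic subset" is not at all automatic from $\rho\otimes\overline{\mathcal{K}}$ being semisimple. The paper's Proposition~\ref{Prop semisimple} devotes a full argument to this: it passes to a finite extension $\mathscr{K}/\mathcal{K}$, uses that complete local Noetherian domains are Nagata (hence N-2) and that affinoid domains have Noetherian integral closures, decomposes $\rho$ into pieces that become irreducible after inverting one element, and then applies Chenevier's result \cite[Lemma 7.2.2]{ChenevierGLn} to produce a single element $h\in\mathcal{O}$ whose non-vanishing under $\lambda$ forces $\lambda\circ\rho$ to be a direct sum of absolutely irreducibles. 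Without such an argument your $\pi$ may fail condition (ii). Second, your Baire-category argument for producing the specialization --- extending $\overline{L}$-points of the unit polydisc across the finite extension $\mathcal{O}/\mathcal{O}_0[[X_1,\ldots,X_m]]$ while keeping continuity, and establishing nowhere-density of the vanishing loci --- is the content of Proposition~\ref{Prop}, which the paper handles more concretely by induction and Weierstrass division rather than by topology. Your plan is plausible but needs these two propositions proved; in the present form they are exactly the points you defer.
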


We prove Propositions \ref{Prop}, \ref{Prop semisimple} in the next section and then give the proof of Theorem \ref{Thm}. Proposition \ref{Prop} states that given countably many nonzero elements of a ring $\mathcal{O}$ as in Theorem \ref{Thm}, $\mathcal{O}$ admits a continuous $\overline{\mathbb{Q}}_p$-specialization which vanishes at none of the given elements. Under some minor assumptions on $\mathcal{O}$, Proposition \ref{Prop semisimple} states that $\rho$ has enough semisimple $\overline{\mathbb{Q}}_p$-specializations. Applying Proposition \ref{Prop} to some nonzero entries (if any) of the monodromies of $\rho$ at the places of ramification and using Proposition \ref{Prop semisimple}, it follows that $\rho$ has a continuous semisimple specialization over $\overline{\mathbb{Q}}_p$ such that the set of ramified primes of $\rho$ coincides with the set of primes of ramification of this specialization. Then applying \cite[Theorem 1]{KhareRajanRamification}, Theorem \ref{Thm} follows. 

Theorem \ref{Thm} would also hold when $\mathcal{O}$ is a complete local Noetherian domain if Proposition \ref{Prop} holds for such rings. This proposition is proved for affinoid algebras (which are defined as the quotients of Tate algebras) by showing that it holds for Tate algebras and using the fact that affinoid algebras admit finite monomorphisms from Tate algebras (by Noether normalization \cite[Proposition 3, p. 32]{BoschFormalRigidGeo}). For complete local Noetherian rings of characteristic zero and residue characteristic $p$, it is known that such rings are quotients of the power series rings with coefficients in $p$-adic integer rings by the Cohen structure theorem \cite[Tag 032A]{StacksProject}. But we do not know if the quotients of power series rings over $p$-adic integer rings admit finite monomorphisms from power series rings. For this reason, we could not prove that Theorem \ref{Thm} holds even when $\mathcal{O}$ is a complete local Noetherian ring. However, under an additional assumption on the existence of enough pure specializations of $\rho$, we have the result below. Its proof relies on Proposition \ref{Prop semisimple} which guarantees the existence of enough semisimple $\overline{\mathbb{Q}}_p$-specializations of $\rho$, the fact that non-trivial action of inertia does not become trivial under pure specializations \cite[Theorem 3.1]{BigPuritySubAIF} and that the set of ramified primes of a continuous semisimple $p$-adic Galois representation is of density zero \cite[Theorem 1]{KhareRajanRamification}. 

\begin{theorem}
\label{Thm Pure}
If $\mathcal{O}$ is a complete local Noetherian ring over $\mathbb{Z}_p$ with finite residue field of characteristic $p$, $\rho$ is continuous with respect to the maximal ideal adic topology and the kernels of pure specializations of $\rho$ form a Zariski-dense subset of $\mathrm{Spec} \mathcal{O}$, then the set of ramified primes of $\rho$ is of density zero.
\end{theorem}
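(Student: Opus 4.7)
The approach is to reduce to the classical Khare--Rajan setting by producing a single $\overline{\mathbb{Q}}_p$-specialization of $\rho$ that is simultaneously pure and semisimple, and whose ramification locus contains that of $\rho$. To this end, I would first invoke Proposition \ref{Prop semisimple}, which under the standing hypotheses on $\mathcal{O}$ supplies a large family of $\overline{\mathbb{Q}}_p$-specializations of $\rho$ that are semisimple. Combining this with the hypothesis that the kernels of pure specializations of $\rho$ form a Zariski-dense subset of $\mathrm{Spec}\,\mathcal{O}$, I would extract a closed point $x \in \mathrm{Spec}\,\mathcal{O}$ with residue field embedding into $\overline{\mathbb{Q}}_p$ such that the specialization $\rho_x$ is both pure and semisimple.

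With such an $x$ fixed, the next step is to compare the ramification loci of $\rho$ and $\rho_x$. For every finite place $v$ at which $\rho$ is ramified, the inertia group $I_v$ acts non-trivially on $\rho$; by \cite[Theorem 3.1]{BigPuritySubAIF}, a pure specialization cannot kill a non-trivial inertial action. Consequently $\rho_x$ is also ramified at $v$, and the set of ramified primes of $\rho$ is contained in that of $\rho_x$.

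To conclude, $\rho_x$ is a continuous semisimple representation of $G_F$ with coefficients in $\overline{\mathbb{Q}}_p$, so \cite[Theorem 1]{KhareRajanRamification} ensures that the set of ramified primes of $\rho_x$ has Dirichlet density zero. By the containment established in the previous step, the set of ramified primes of $\rho$ has density zero as well.

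The main obstacle I anticipate lies in the very first step: making precise the intersection of the locus of semisimple specializations furnished by Proposition \ref{Prop semisimple} with the Zariski-dense locus of pure specializations provided by the hypothesis. One needs to verify that the former is either Zariski-open in the relevant component or otherwise robust enough to meet every Zariski-dense subset. Once a pure, semisimple $\overline{\mathbb{Q}}_p$-specialization is in hand, the remaining argument is a transparent composition of the big-purity input with the Khare--Rajan density theorem.
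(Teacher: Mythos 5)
Your proposal follows essentially the same route as the paper, and the obstacle you anticipate is already resolved by the precise form of Proposition \ref{Prop semisimple}: it produces a single nonzero $h\in\mathcal{O}$ such that the semisimple specializations include all those not vanishing at $h$, so the semisimple locus contains the nonempty basic open set $D(h)$, which every Zariski-dense subset of $\mathrm{Spec}\,\mathcal{O}$ must meet. The only other point the paper makes explicit is the appeal to Grothendieck's monodromy theorem (potential unipotence of inertia away from $p$) before invoking \cite[Theorem 3.1]{BigPuritySubAIF}; otherwise your argument matches the paper's.
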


Theorems \ref{Thm}, \ref{Thm Pure} are analogues of the result \cite[Theorem 1]{KhareRajanRamification} of Khare and Rajan in the context of big Galois representations, i.e., in the context of Galois representations with coefficients in rings of large Krull dimension.

\section{Proofs of the results}

We prove the following propositions and then use them to establish Theorems \ref{Thm}, \ref{Thm Pure}.
\begin{proposition}
\label{Prop}
Suppose $\mathcal{O}$ admits a finite monomorphism from a power series ring with coefficients in a $p$-adic integer ring or $\mathcal{O}$ is an affinoid algebra over a $p$-adic number field. Then given countably many nonzero elements of $\mathcal{O}$, there exists a continuous $\mathbb{Z}_p$-algebra homomorphism $\lambda:\mathcal{O} \to \overline{\mathbb{Q}}_p$ which vanishes at none of them. 
\end{proposition}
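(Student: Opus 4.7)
\emph{Proof plan.} The plan is to reduce, in both hypothesis cases, to a single ``model'' ring---either the power series ring $A=\mathcal{O}_K[[X_1,\dots,X_m]]$ over a $p$-adic integer ring or the Tate algebra $A=K\langle X_1,\dots,X_n\rangle$ over a $p$-adic field---and then construct the desired specialization on $A$ directly by induction on the number of variables. For the reduction, in the affinoid case Noether normalization supplies a finite monomorphism from a Tate algebra into $\mathcal{O}$, so in either hypothesis I have a finite injection $A\hookrightarrow\mathcal{O}$ with $A$ a model ring. Each nonzero $f_i\in\mathcal{O}$ satisfies a monic integral relation over $A$ of minimal degree, say
\begin{equation*}
f_i^{d_i}+a_{i,d_i-1}f_i^{d_i-1}+\cdots+a_{i,0}=0,\qquad a_{i,j}\in A,
\end{equation*}
whose constant term $a_{i,0}$ is nonzero: otherwise I could factor out a $T$, contradicting minimality of the degree, using that $\mathcal{O}$ is a domain and $f_i\neq 0$. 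If I produce a continuous $\mathbb{Z}_p$-algebra homomorphism $\lambda:A\to\overline{\mathbb{Q}}_p$ with $\lambda(a_{i,0})\neq 0$ for every $i$, then any extension $\widetilde\lambda:\mathcal{O}\to\overline{\mathbb{Q}}_p$ (which exists because $\mathcal{O}$ is integral over $A$ and $\overline{\mathbb{Q}}_p$ is algebraically closed) sends $f_i$ to a root of a polynomial with nonzero constant term, so $\widetilde\lambda(f_i)\neq 0$; continuity of $\widetilde\lambda$ follows from the uniqueness of the extension of the $p$-adic valuation to a finite extension, together with the fact that the Jacobson-radical-adic topology on the semilocal ring $\mathcal{O}$ is defined by $\mathfrak{m}_A\mathcal{O}$.

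The model case proceeds by induction on the number of variables. For $A=\mathcal{O}_K[[X_1,\dots,X_m]]$, expand each $f_i=\sum_j g_{i,j}(X_1,\dots,X_{m-1})X_m^j$, pick $j_i$ with $g_{i,j_i}\neq 0$, and apply the inductive hypothesis to the countable family $\{g_{i,j_i}\}_i$ to obtain $(x_1,\dots,x_{m-1})\in\mathfrak{m}_{\overline{\mathbb{Z}}_p}^{m-1}$ at which no $g_{i,j_i}$ vanishes. Each specialization $h_i(X_m):=f_i(x_1,\dots,x_{m-1},X_m)$ is then a nonzero power series in one variable over a $p$-adic integer ring, and Weierstrass preparation (after pulling out the largest power of a uniformizer dividing $h_i$) writes $h_i$ as a unit times a distinguished polynomial, so its zero set in the open disk $\mathfrak{m}_{\overline{\mathbb{Z}}_p}$ is finite. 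A countable union of finite sets cannot exhaust the uncountable $\mathfrak{m}_{\overline{\mathbb{Z}}_p}$, so I choose $x_m$ outside all these zero sets and define $\lambda(X_i):=x_i$, extended by any fixed $\mathbb{Z}_p$-embedding $\mathcal{O}_K\hookrightarrow\overline{\mathbb{Z}}_p$. The Tate algebra case is formally the same, with the closed unit polydisk $\overline{\mathbb{Z}}_p^n$ replacing $\mathfrak{m}_{\overline{\mathbb{Z}}_p}^m$ and Weierstrass preparation for $K\langle X\rangle$ replacing its formal counterpart.

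The main obstacle I anticipate is bookkeeping: ensuring that at each inductive stage the $p$-adic coefficient ring where the $x_j$'s live enlarges in a controlled way (so that Weierstrass preparation can be applied to $h_i$ inside a single finite extension of $\mathcal{O}_K$) and checking continuity of $\widetilde\lambda$ in the finiteness reduction. The key conceptual point---and the reason the argument lands inside $\overline{\mathbb{Q}}_p$ rather than only inside $\mathbb{C}_p$---is the cardinality step: the bad set is countable while the space of candidate specializations is uncountable, so no Baire-category argument over a completed algebraic closure is required.
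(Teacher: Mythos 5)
Your proof is correct, and it diverges from the paper's in two places worth noting. First, you spell out the reduction from a ring $\mathcal{O}$ finite over a model ring $A$ to $A$ itself: you replace each nonzero $f_i\in\mathcal{O}$ by the (necessarily nonzero) constant term $a_{i,0}\in A$ of a monic relation of minimal degree, and then extend the point of $A$ to a point of $\mathcal{O}$, noting that the resulting $\widetilde\lambda(f_i)$ is a root of a polynomial with nonzero constant term. The paper simply asserts ``it suffices to prove the proposition for power series rings and Tate algebras'' without recording this argument, so your version is more careful here; the constant-term device you use is in fact the same one the paper deploys later, inside the proof of Proposition~\ref{Prop semisimple}, to pass from $\mathscr{O}$ back to $\mathcal{O}$. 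Second, your induction over the variables is by pointwise specialization: settle $X_1,\dots,X_{m-1}$ by the inductive hypothesis applied to one nonzero $X_m$-coefficient of each $f_i$, then choose $x_m$ avoiding the finitely many zeros (Weierstrass preparation) of each of the countably many resulting one-variable series. The paper instead substitutes $X_m\mapsto a+a^2X_1+\cdots+a^mX_{m-1}$ for a single well-chosen scalar $a$, dropping from $m$ to $m-1$ variables at once, and invokes Weierstrass division together with uncountability of $\mathfrak{m}_K$ to choose $a$. Both routes rest on the same ``countable bad set inside an uncountable parameter space'' principle; yours is arguably more elementary, while the paper's one-parameter substitution avoids having to evaluate power series coefficients at a chosen point and keeps the coefficient field fixed throughout the induction. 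One small nit: since $\mathcal{O}$ is a domain finite over the complete local ring $A$, it is itself local, so you can say ``local'' rather than ``semilocal'' when discussing continuity of $\widetilde\lambda$; the continuity argument itself is fine.
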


\begin{proof}
Note that affinoid algebras admit finite monomorphisms from Tate algebras by Noether normalization \cite[Proposition 3, p. 32]{BoschFormalRigidGeo}. Hence it suffices to prove the proposition for power series rings and Tate algebras. 

Let $K$ be a $p$-adic number field with ring of integers $\mathcal{O}_K$. Let $\mathfrak{m}_K$ denote the maximal ideal of $\mathcal{O}_K$. Note that given a nonzero element of the Tate algebra $K\langle X_1, \cdots, X_m \rangle$, it has a nonzero multiple with Gauss norm one, which is divisible by $X_m - (a + a^2X_1 + a^3 X_2 + \cdots + a^mX_{m-1})$ only for finitely many elements $a\in \mathfrak{m}_K$ by the Weierstass division formula \cite[Theorem 8, p. 17]{BoschFormalRigidGeo}. Since $\mathfrak{m}_K$ contains $p\mathbb{Z}_p$ and $\mathbb{Z}_p$ is uncountable, there exists an element $a\in \mathfrak{m}_K$ such that each of the given countably many elements of $K\langle X_1, \cdots, X_m\rangle$ have nonzero images under the $K$-algebra map $K\langle X_1, \cdots, X_m\rangle \to K\langle X_1, \cdots, X_{m-1}\rangle$ sending $X_m$ to $a + a^2X_1 + a^3 X_2 + \cdots + a^mX_{m-1}$ and $X_i$ to $X_i$ for $i< m$. Hence the proposition holds for the Tate algebra $K\langle X_1, \cdots, X_m\rangle$ if it holds for $K\langle X_1, \cdots, X_{m-1}\rangle$. Since the proposition holds for $K$, by induction, it holds for any Tate algebra. 

Note that a nonzero element of $\mathcal{O}_K[[X_1, \cdots, X_m]]$ is divisible by $X_m - (a + a^2X_1 + a^3 X_2 + \cdots + a^mX_{m-1})$ only for finitely many elements $a\in \mathfrak{m}_K$. Since $\mathfrak{m}_K$ is uncountable, for some element $a\in \mathfrak{m}_K$, the given countably many elements of $\mathcal{O}_K[[X_1, \cdots, X_m]]$ have nonzero images under the $\mathcal{O}_K$-algebra map $\mathcal{O}_K[[X_1, \cdots, X_m]] \to \mathcal{O}_K[[X_1, \cdots, X_{m-1}]]$ sending $X_m$ to $a + a^2X_1 + a^3 X_2 + \cdots + a^mX_{m-1}$ and $X_i$ to $X_i$ for $i< m$. Then by induction, the proposition follows for power series rings. 
\end{proof}

\begin{proposition}
\label{Prop semisimple}
Suppose $\mathcal{O}$ is a complete local Noetherian ring over $\mathbb{Z}_p$ with finite residue field of characteristic $p$ or an affinoid algebra over a $p$-adic field. Then there exists a nonzero element $h\in \mathcal{O}$ such that $\lambda\circ \rho$ is semisimple for any homomorphism $\lambda: \mathcal{O} \to \overline{\mathbb{Q}}_p$ with $\lambda(h)\neq 0$. 
\end{proposition}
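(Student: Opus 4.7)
The plan is to define $h$ as the product of two nonzero elements of $\mathcal{O}$: one element controlling the $\overline{\mathbb{Q}}_p$-dimension of the algebra generated by the specialization, and the discriminant of the trace pairing, which will force semisimplicity via the standard criterion that a finite-dimensional subalgebra of $M_n$ in characteristic zero is semisimple iff the inherited trace pairing $(x,y)\mapsto\mathrm{tr}(xy)$ is non-degenerate.

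Let $B\subseteq M_n(\mathcal{O})$ be the $\mathcal{O}$-subalgebra generated by $\rho(G_F)$; since $\rho(G_F)$ is closed under multiplication, $B$ is just its $\mathcal{O}$-linear span. Under either hypothesis $\mathcal{O}$ is Noetherian, so $B$ is a finitely generated $\mathcal{O}$-module. I pick $b_1,\ldots,b_r\in B$ forming a $\mathcal{K}$-basis of $B_{\mathcal{K}}:=B\otimes_{\mathcal{O}}\mathcal{K}$, and choose $c\in\mathcal{O}\setminus\{0\}$ with $cB\subseteq\mathcal{O}b_1+\cdots+\mathcal{O}b_r$ (obtained by clearing denominators in the expressions for a finite $\mathcal{O}$-module generating set of $B$ in this basis). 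Let $M:=(\mathrm{tr}(b_ib_j))_{i,j}\in M_r(\mathcal{O})$ be the Gram matrix of the trace pairing on $B_{\mathcal{K}}\subseteq M_n(\mathcal{K})$. Since $\rho\otimes\overline{\mathcal{K}}$ is semisimple, Jacobson density (Burnside) gives an isomorphism $B_{\overline{\mathcal{K}}}\cong\prod_i M_{n_i}(\overline{\mathcal{K}})$, with each factor embedded in $M_n(\overline{\mathcal{K}})$ with some multiplicity $m_i\geq 1$; the trace pairing then becomes the orthogonal direct sum of $m_i$ times the standard non-degenerate trace forms on the $M_{n_i}(\overline{\mathcal{K}})$, and so is non-degenerate. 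Hence $\det M\neq 0$ in $\mathcal{O}$.

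Take $h:=c\cdot\det M$. Given $\lambda:\mathcal{O}\to\overline{\mathbb{Q}}_p$ with $\lambda(h)\neq 0$, applying $\lambda$ entry-wise yields a subring $\lambda(B)\subseteq M_n(\overline{\mathbb{Q}}_p)$, and its $\overline{\mathbb{Q}}_p$-linear span $\widetilde{B}_\lambda$ coincides with the $\overline{\mathbb{Q}}_p$-subalgebra generated by $(\lambda\circ\rho)(G_F)$; semisimplicity of $\widetilde{B}_\lambda$ is equivalent to that of $\lambda\circ\rho$. Using $cB\subseteq\sum_i\mathcal{O}b_i$ together with $\lambda(c)\neq 0$, I obtain $\widetilde{B}_\lambda=\mathrm{span}_{\overline{\mathbb{Q}}_p}(\lambda(b_1),\ldots,\lambda(b_r))$, on which the Gram matrix of the trace pairing is $\lambda(M)$ with non-vanishing determinant $\lambda(\det M)$. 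Hence the pairing is non-degenerate on $\widetilde{B}_\lambda$, and since any nilpotent ideal consists of nilpotent matrices which have trace zero, it lies in the radical of the pairing and must vanish, making $\widetilde{B}_\lambda$ semisimple. The most delicate point will be controlling the dimension of $\widetilde{B}_\lambda$ after specialization, which is why the auxiliary element $c$ is needed; the non-degeneracy $\det M\neq 0$ itself is a routine consequence of the Wedderburn decomposition of $B_{\overline{\mathcal{K}}}$.
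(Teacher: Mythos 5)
Your proof is correct, and it takes a genuinely different and more elementary route than the paper's. The paper first passes to the integral closure $\mathscr{O}$ of $\mathcal{O}$ in a finite extension $\mathscr{K}$ of $\mathcal{K}$ (invoking the Nagata/N-2 property and \cite[Theorem 3.5.1]{FresnelVanDerPutRAGApplication} to keep $\mathscr{O}$ Noetherian), shows by a localization-and-induction argument that $\rho$ decomposes over some $\mathscr{O}_f$ into summands $\rho_1,\dots,\rho_r$ that are absolutely irreducible over $\overline{\mathcal{K}}$, invokes Chenevier's \cite[Lemma 7.2.2]{ChenevierGLn} to produce a $g$ keeping each $\mu\circ\rho_i$ absolutely irreducible, and finally descends from $\mathscr{O}$ back to $\mathcal{O}$ by taking $h$ to be the constant term of a minimal polynomial of $fg$. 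You instead work entirely inside $\mathcal{O}$: you take $B$ to be the $\mathcal{O}$-span of $\rho(G_F)$ (finitely generated since $\mathcal{O}$ is Noetherian and $B\subseteq M_n(\mathcal{O})$), pick a $\mathcal{K}$-basis $b_1,\dots,b_r$ of $B_\mathcal{K}$ lying in $B$ together with a denominator $c$ clearing $B$ into $\sum\mathcal{O}b_i$, and set $h=c\cdot\det M$ where $M$ is the Gram matrix of the trace pairing. The non-vanishing $\det M\neq 0$ comes from semisimplicity of $\rho\otimes\overline{\mathcal{K}}$ via Artin--Wedderburn, and for a specialization $\lambda$ with $\lambda(h)\neq 0$ the condition $\lambda(c)\neq 0$ forces $\widetilde B_\lambda=\mathrm{span}(\lambda(b_i))$ (in fact $\det\lambda(M)\neq 0$ then makes the $\lambda(b_i)$ a basis), the trace form on $\widetilde B_\lambda$ is non-degenerate, hence $\widetilde B_\lambda$ has no nonzero nilpotent ideal and is semisimple, so $\lambda\circ\rho$ is semisimple. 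Your route avoids the passage to integral closures, the excellence/Nagata input, Chenevier's irreducibility lemma, and the descent-via-minimal-polynomial step; what the paper's route buys is finer information (an actual matching decomposition into absolutely irreducible constituents before and after specialization), which is not needed for the bare semisimplicity statement. Both constructions of $h$ are valid; yours is the shorter, more self-contained argument.
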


\begin{proof}
Note that a complete local Noetherian domain is a Nagata ring by \cite[Corollary 2, p. 234]{MatsumuraCommutativeAlgebra} and in particular, it is N-2, i.e., its integral closure in any finite extension of its fraction field is finite and hence Noetherian. If an affinoid algebra is an integral domain, then its integral closure in its fraction field is Noetherian (being finite by \cite[Theorem 3.5.1]{FresnelVanDerPutRAGApplication}) and hence its integral closure in any finite extension of its fraction field is also Noetherian by \cite[Proposition 31.B]{MatsumuraCommutativeAlgebra}. So the integral closure of $\mathcal{O}$ in any finite extension of $\mathcal{K}$ is Noetherian. 

Let $\mathcal{T}$ denote the free $\mathcal{O}$-module $\mathcal{O}^n$ with $G_F$ acting on it by $\rho$. We claim that there exists a finite extension $\mathscr{K}$ of $\mathcal{K}$ contained in $\overline{\mathcal{K}}$, a nonzero element $f$ in the integral closure $\mathscr{O}$ of $\mathcal{O}$ in $\mathscr{K}$ and representations $\rho_1 : G_ F\to \mathrm{GL}_{n_1} (\mathscr{O}_f), \cdots, \rho_r : G_ F\to \mathrm{GL}_{n_r} (\mathscr{O}_f)$ such that $\rho_i\otimes \overline{\mathcal{K}}$ is irreducible for any $1\leq i\leq r$ and the representation $\mathcal{T} \otimes \mathscr{O}_f$ is isomorphic to the direct sum of $\rho_1, \cdots, \rho_r$. To establish the claim by induction, it suffices to consider the case when $\rho\otimes \overline{\mathcal{K}}$ is reducible, which we assume. Since $\mathcal{T} \otimes \overline{\mathcal{K}}$ is semisimple, there exists a finite extension $\mathscr{K}$ of $\mathcal{K}$ contained in $\overline{\mathcal{K}}$ such that $\mathcal{T} \otimes \mathscr{K}$ decomposes into the direct sum of nonzero $\mathscr{K}$-linear subspaces $\mathscr{V}_1, \mathscr{V}_2$ stable under the action of $G_F$. Denote the $G_F$-representation $\mathcal{T} \otimes_\mathcal{O} \mathscr{O}$ by $\mathscr{T}$. Then the $\mathscr{O}$-module $\mathscr{T}_i:= \mathscr{T}\cap \mathscr{V}_i$ is stable under the action of $G_F$. Let $\mathscr{B}_i$ be a subset of $\mathscr{T}_i$ such that $\mathscr{B}_i$ is a basis of $\mathscr{V}_i$ over $\mathscr{K}$. Let $\mathscr{T}'_i$ denote the $\mathscr{O}$-submodule of $\mathscr{T}$ generated by $\mathscr{B}_i$. Note that $\mathscr{T}'_i$ is contained in $\mathscr{T}_i$ and is free over $\mathscr{O}$. For any element $t_i$ of $\mathscr{T}_i$, there exists a nonzero element $d_i \in \mathscr{O}$ such that $d_it_i$ belongs to $\mathscr{T}'_i$. Since $\mathscr{O}$ is Noetherian, $\mathscr{T}_i$ is finitely generated over $\mathscr{O}$ and hence there exists a nonzero element $f_i \in \mathscr{O}$ such that $f_i\mathscr{T}_i$ is contained in $\mathscr{T}'_i$. This proves that $\mathscr{T}'_i\otimes \mathscr{O}_{f_i}$ is equal to $\mathscr{T}_i\otimes \mathscr{O}_{f_i}$. Similarly, the direct sum of $\mathscr{T}'_1, \mathscr{T}'_2$ is equal to $\mathscr{T}$ after some localization. Hence there exists a nonzero element $f\in \mathscr{O}$ and representations of $G_F$ on two free $\mathscr{O}_f$-modules whose direct sum is isomorphic to $\mathscr{T}_f$ and whose tensor products with $\mathscr{K}$ are isomorphic to $\mathscr{V}_1, \mathscr{V}_2$. Then the claim follows from induction. 

By \cite[Lemma 7.2.2]{ChenevierGLn}, there exists a nonzero element $g\in \mathscr{O}_f$ such that for any ring homomorphism $\mu: \mathscr{O}_f \to \overline{\mathbb{Q}}_p$ with $\mu (g) \neq 0$, the representations $\mu \circ \rho_1, \cdots, \mu \circ \rho_r$ are absolutely irreducible. Replacing $g$ by its product with a power of $f$ (if necessary), we assume that $g$ is a nonzero element of $\mathscr{O}$. So for any map $\mu: \mathscr{O} \to \overline{\mathbb{Q}}_p$ with $\mu(fg)\neq 0$, the representations $\mu \circ \rho_1, \cdots, \mu \circ \rho_r$ are absolutely irreducible. For each ring homomorphism $\lambda: \mathcal{O} \to \overline{\mathbb{Q}}_p$, fix an extension $\tilde \lambda: \mathscr{O} \to \overline{\mathbb{Q}}_p$ of $\lambda$. Let $X^s + a_{s-1}X^{s-1} + \cdots + a_0$ be a monic polynomial over $\mathcal{O}$ of least degree which is satisfied by $fg$. Note that $a_0$ is nonzero. Since the representation $\mathscr{T}_f$ is isomorphic to the direct sum of $\rho_1, \cdots, \rho_r$, for any map $\lambda:\mathcal{O} \to \mathbb{Z}_p$ satisfying $\lambda(h)\neq 0$ where $h:= a_0$, the representation $\lambda \circ \rho$ is isomorphic to the direct sum of the absolutely irreducible representations $\tilde \lambda \circ \rho_1, \cdots, \tilde \lambda \circ \rho_r$, and hence $\lambda \circ \rho$ is semisimple. 
\end{proof}

\begin{proof}[Proof of Theorem \ref{Thm}]
Since $\rho$ is continuous with coefficients in $\mathcal{O}$ and $\mathcal{O}$ is a power series ring with coefficients in a $p$-adic integer ring or an affinoid algebra over a $p$-adic number field, by Grothendieck's monodromy theorem (\cite[pp. 515--516]{SerreTate}, \cite[Lemma 7.8.14]{BellaicheChenevierAsterisQUE}), the inertia group at any finite place of $F$ not dividing $p$ acts potentially unipotently on $\rho$. For each place $v\nmid p$ of $F$ such that $\rho$ has nonzero monodromy at $v$, choose a nonzero entry $\eta_v$ of the monodromy. By Proposition \ref{Prop}, there exists a continuous $\mathbb{Z}_p$-algebra homomorphism $\lambda: \mathcal{O} \to \overline{\mathbb{Q}}_p$ such that $\lambda(h)$ is nonzero and $\lambda (\eta_v)$ is nonzero for any place $v\nmid p$ of $F$ where $\rho$ has nonzero monodromy. Suppose $v\nmid p$ is a place of ramification of $\rho$. If $\rho$ has nontrivial monodromy at $v$, then $\lambda \circ \rho$ also has nontrivial monodromy at $v$ and hence ramified at $v$. If $\rho$ has trivial monodromy at $v$, then some element of the inertia group at $v$ acts on $\rho$ by a nontrivial finite order automorphism and hence has a nontrivial root of unity as an eigenvalue. So its induced action on $\lambda \circ \rho$ also has a nontrivial root of unity as an eigenvalue, which implies that $\lambda \circ \rho$ is ramified at $v$. Consequently, the set of places of ramification of $\rho$ and $\lambda \circ \rho$ are equal. Since $\lambda (h)$ is nonzero, the representation $\lambda \circ \rho$ is semisimple. By \cite[Theorem 1]{KhareRajanRamification}, the set of places of ramification of $\lambda \circ \rho$ is of density zero. So the set of places of ramification of $\rho$ is of density zero. This proves Theorem \ref{Thm}. 
\end{proof}

\begin{proof}[Proof of Theorem \ref{Thm Pure}]
Since the kernels of pure specializations of $\rho$ form a Zariski-dense subset of $\mathrm{Spec} \mathcal{O}$, by Proposition \ref{Prop semisimple}, there exists a continuous $\mathbb{Z}_p$-algebra homomorphism $\lambda: \mathcal{O} \to \mathbb{Z}_p$ such that $\lambda \circ \rho$ is semisimple and pure. Since $\rho$ is continuous with coefficients in a complete local Noetherian ring over $\mathbb{Z}_p$ with finite residue field of characteristic zero, by Grothendieck's monodromy theorem \cite[pp. 515--516]{SerreTate}, the inertia group at any finite place of $F$ not dividing $p$ acts potentially unipotently on $\rho$. Then the purity of $\lambda \circ \rho$ implies that the set of places of ramification of $\rho$ and $\lambda \circ \rho$ are equal by \cite[Theorem 3.1]{BigPuritySubAIF}. Since $\lambda \circ \rho$ is semisimple, by \cite[Theorem 1]{KhareRajanRamification}, the set of places of ramification of $\lambda\circ \rho$ is of density zero. So the set of places of ramification of $\rho$ is of density zero. This proves Theorem \ref{Thm Pure}. 
\end{proof}

\section*{Acknowledgement}
The author would like to acknowledge the support provided by a postdoctoral fellowship at the Ben-Gurion University of the Negev, offered by the Israel Science Foundation Grant number 87590011 of Prof.\,Ishai Dan-Cohen. He would like to thank Somnath Jha for suggesting improvements on an earlier draft of this work. 

\def\cprime{$'$} \def\Dbar{\leavevmode\lower.6ex\hbox to 0pt{\hskip-.23ex
  \accent"16\hss}D} \def\cfac#1{\ifmmode\setbox7\hbox{$\accent"5E#1$}\else
  \setbox7\hbox{\accent"5E#1}\penalty 10000\relax\fi\raise 1\ht7
  \hbox{\lower1.15ex\hbox to 1\wd7{\hss\accent"13\hss}}\penalty 10000
  \hskip-1\wd7\penalty 10000\box7}
  \def\cftil#1{\ifmmode\setbox7\hbox{$\accent"5E#1$}\else
  \setbox7\hbox{\accent"5E#1}\penalty 10000\relax\fi\raise 1\ht7
  \hbox{\lower1.15ex\hbox to 1\wd7{\hss\accent"7E\hss}}\penalty 10000
  \hskip-1\wd7\penalty 10000\box7}
  \def\polhk#1{\setbox0=\hbox{#1}{\ooalign{\hidewidth
  \lower1.5ex\hbox{`}\hidewidth\crcr\unhbox0}}}


\begin{thebibliography}{{Sta}18}

\bibitem[BC09]{BellaicheChenevierAsterisQUE}
Jo{\"e}l Bella{\"{\i}}che and Ga{\"e}tan Chenevier.
\newblock Families of {G}alois representations and {S}elmer groups.
\newblock {\em Ast\'erisque}, (324):xii+314, 2009.

\bibitem[Bos14]{BoschFormalRigidGeo}
Siegfried Bosch.
\newblock {\em Lectures on formal and rigid geometry}, volume 2105 of {\em
  Lecture Notes in Mathematics}.
\newblock Springer, Cham, 2014.

\bibitem[Che04]{ChenevierGLn}
Ga{\"e}tan Chenevier.
\newblock Familles {$p$}-adiques de formes automorphes pour {${\rm GL}_n$}.
\newblock {\em J. Reine Angew. Math.}, 570:143--217, 2004.

\bibitem[Del71]{DeligneModFormAndlAdicRepr}
Pierre Deligne.
\newblock Formes modulaires et repr\'esentations {$l$}-adiques.
\newblock In {\em S\'eminaire {B}ourbaki. {V}ol. 1968/69: {E}xpos\'es
  347--363}, volume 175 of {\em Lecture Notes in Math.}, pages Exp.\ No.\ 355,
  139--172. Springer, Berlin, 1971.

\bibitem[Eic54]{Eichler54}
Martin Eichler.
\newblock Quatern\"are quadratische {F}ormen und die {R}iemannsche {V}ermutung
  f\"ur die {K}ongruenzzetafunktion.
\newblock {\em Arch. Math.}, 5:355--366, 1954.

\bibitem[FvdP04]{FresnelVanDerPutRAGApplication}
Jean Fresnel and Marius van~der Put.
\newblock {\em Rigid analytic geometry and its applications}, volume 218 of
  {\em Progress in Mathematics}.
\newblock Birkh\"auser Boston, Inc., Boston, MA, 2004.

\bibitem[Hid86a]{HidaGalrepreord}
Haruzo Hida.
\newblock Galois representations into {${\rm GL}_2({\bf Z}_p[[X]])$} attached
  to ordinary cusp forms.
\newblock {\em Invent. Math.}, 85(3):545--613, 1986.

\bibitem[Hid86b]{HidaIwasawa}
Haruzo Hida.
\newblock Iwasawa modules attached to congruences of cusp forms.
\newblock {\em Ann. Sci. \'Ecole Norm. Sup. (4)}, 19(2):231--273, 1986.

\bibitem[KR01]{KhareRajanRamification}
Chandrashekhar Khare and C.~S. Rajan.
\newblock The density of ramified primes in semisimple {$p$}-adic {G}alois
  representations.
\newblock {\em Internat. Math. Res. Notices}, (12):601--607, 2001.

\bibitem[Mat80]{MatsumuraCommutativeAlgebra}
Hideyuki Matsumura.
\newblock {\em Commutative algebra}, volume~56 of {\em Mathematics Lecture Note
  Series}.
\newblock Benjamin/Cummings Publishing Co., Inc., Reading, Mass., second
  edition, 1980.

\bibitem[Ram00]{RamakrishnaInfinitelyRamified}
Ravi Ramakrishna.
\newblock Infinitely ramified {G}alois representations.
\newblock {\em Ann. of Math. (2)}, 151(2):793--815, 2000.

\bibitem[Sah17]{BigPuritySubAIF}
Jyoti~Prakash Saha.
\newblock Purity for families of {G}alois representations.
\newblock {\em Ann. Inst. Fourier (Grenoble)}, 67(2):879--910, 2017.

\bibitem[Ser98]{SerreAbelianEllAdic}
Jean-Pierre Serre.
\newblock {\em Abelian {$l$}-adic representations and elliptic curves},
  volume~7 of {\em Research Notes in Mathematics}.
\newblock A K Peters, Ltd., Wellesley, MA, 1998.
\newblock With the collaboration of Willem Kuyk and John Labute, Revised
  reprint of the 1968 original.

\bibitem[Shi58]{ShimuraCorrespondances}
Goro Shimura.
\newblock Correspondances modulaires et les fonctions {$\zeta $} de courbes
  alg\'ebriques.
\newblock {\em J. Math. Soc. Japan}, 10:1--28, 1958.

\bibitem[ST68]{SerreTate}
Jean-Pierre Serre and John Tate.
\newblock Good reduction of abelian varieties.
\newblock {\em Ann. of Math. (2)}, 88:492--517, 1968.

\bibitem[{Sta}18]{StacksProject}
The {Stacks Project Authors}.
\newblock \textit{Stacks Project}.
\newblock \url{http://stacks.math.columbia.edu}, 2018.

\end{thebibliography}
\end{document}